\numberwithin{equation}{section}
\newtheorem*{theorem*}{Theorem}
\newtheorem{theorem}{Theorem}[section]
\newtheorem{lemma}[theorem]{Lemma}
\newtheorem{proposition}[theorem]{Proposition}
 \newtheorem{corollary}[theorem]{Corollary}
      \theoremstyle{definition}
     \theoremstyle{remark}
     \newtheorem{remark}[theorem]{Remark}
  \newcommand{\Sf}{\mathcal{S}} 
\newcommand{\diam}{\mathop{\mathrm{diam}}} 
\newcommand{\Sym}{\mathop{\mathrm{Sym}}}
\newcommand{\Sch}{\mathop{\mathrm{Sch}}}
\newcommand{\Cay}{\mathop{\mathrm{Cay}}}
 \definecolor{mycolor}{rgb}{0.55,0.0,0.16}
  \definecolor{myred}{rgb}{0.6,0.0,0.16}
  \definecolor{mygreen}{rgb}{0.0,0.6,0.16}
  \definecolor{myviolet}{rgb}{1,0,1}
\subjclass[2020]{Primary: 05C80, 20F69.}  
\keywords{Random graphs, Schreier graphs, diameter bound.}
\title[The diameter of random Schreier graphs]{The diameter of random Schreier graphs} 
\author{Daniele Dona}
\address{Hun-Ren Alfr\'ed R\'enyi Institute of Mathematics, Budapest, Hungary}
\email{dona@renyi.hu}
\author{Luca Sabatini}
\address{Luca Sabatini, University of Warwick} 
\email{sabatini.math@gmail.com}
\begin{document}

\begin{abstract} 
We give a combinatorial proof of the following theorem.
Let $G$ be any finite group acting transitively on a set of cardinality $n$.
If $S \subseteq G$ is a random set of size $k$,
with $k \geq (\log n)^{1+\varepsilon}$ for some $\varepsilon >0$,
then the diameter of the corresponding Schreier graph is $O(\log_k n)$ with high probability.
Except for the implicit constant, this result is the best possible.
\end{abstract} 

\maketitle

\section{Introduction}

Let $G$ be a finite group acting transitively on a set $\Omega$, and let $S \subseteq G$.
The {\itshape Schreier graph} $\Gamma=\Sch(G \circlearrowleft \Omega,S)$ is the directed multigraph with vertex set $V(\Gamma)= \Omega$,
and the edges are the pairs $(\omega,\omega^s)$ for every $\omega \in \Omega$ and $s \in S$.
This is a natural generalization of Cayley graphs, which arise when $G$ is a regular permutation group on $\Omega$,
and we can identify $\Omega$ with $G$.
The general setting is much wilder, as there is no natural action of $G$ on $V(\Gamma)$ that preserves the graph structure.
For example, it is well known that every regular graph of even degree is a Schreier graph over $\Sym(\Omega)$ \cite{Gro77}.

In this paper, we are interested in the diameter of random Schreier graphs,
where by random we mean that the action of $G$ on $\Omega$ is fixed, and a multiset $S \subseteq G$ of size $k$ is chosen uniformly at random.
The value of $k$ is allowed to depend on the size of $\Omega$, which in turn is going to infinity.
It is clear that the results have to depend on the particular group $G$ and action,
and on how large $k$ is with respect to the cardinalities of $G$ and $\Omega$.
For specific ``nice'' actions and bounded $k$, strong results have been obtained in \cite{BF82,FJRST98,BGGT15,EJ22}.
For example, for $\Sym(\Omega)$ in its natural action on $r$-tuples ($r$ fixed), and for any fixed $k \geq 2$,
the symmetrization of the resulting Schreier graph is a $2k$-regular expander graph with probability tending to $1$ as $|\Omega|\rightarrow\infty$ \cite{FJRST98}.
The situation turned out to be quite different for an arbitrary ``bad'' action.
Since $\Gamma$ is connected if and only if the subgroup generated by $S$ is transitive,
it is necessary for $k$ to grow with the size of $\Omega$:
for a regular permutation group, $k$ has to be at least the minimal number of generators.

Let $n$ be the cardinality of $\Omega$.
For $k = \Omega(\log n)$,
a logarithmic bound $O(\log n)$ for the diameter of a random Cayley graph was first given by Alon, Barak and Manber \cite{ABM87}
using a combinatorial method.
Nowadays, we know that $\Omega(\log n)$ random elements are always sufficient to get an expander with high probability \cite{AR94,Sab22}.
For slightly larger $k$, Roichman \cite[Theorem 5.6]{Roi96} proved the following better diameter bound for Cayley graphs:

\begin{theorem*}[Roichman] 
Let $\varepsilon >0$, and let $G$ be any group of cardinality $n$.
Let $k= \lfloor (\log n)^{1+\varepsilon} \rfloor$, and let $S \subseteq G$ be a random set of $k$ elements.
Then
$$
\diam ( \> \Cay(G,S) \> ) 
\> \leq \> 
2(1+\varepsilon^{-1}) \> \frac{\log n}{\log k} 
$$
with probability $1-o(1)$ as $n\rightarrow\infty$.
\end{theorem*}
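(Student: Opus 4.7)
The plan is a ball-doubling argument. Set $m = \lceil (1+\varepsilon^{-1})(\log n)/(\log k) \rceil$, so that $k^m \geq n^{1+1/\varepsilon}$, and let $B(S) = \{s_1 \cdots s_m : s_i \in S\} \subseteq G$ denote the set of length-$m$ products from $S$. The first aim is to show $|B(S)| > n/2$ with high probability over $S$. Granting this, the pigeonhole lemma (for $|A|,|C| > n/2$ in a finite group of order $n$, $|A \cap gC^{-1}| \geq |A|+|C|-n > 0$ for every $g$, so $AC = G$) gives $B(S) \cdot B(S) = G$. Hence every $g \in G$ is a product of $2m$ elements of $S$, so $\diam(\Cay(G,S)) \leq 2m \leq 2(1+\varepsilon^{-1})\log n / \log k + O(1)$.

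To bound $|B(S)|$ from below, for each $g \in G$ define $X_g = |\{(s_1,\ldots,s_m) \in S^m : s_1 \cdots s_m = g\}|$. Since $\sum_g X_g = k^m$ and $B(S)$ is the support of $g \mapsto X_g$, one has $|B(S)| \geq k^m/\max_g X_g$. So it suffices to prove $\max_g X_g < 2 k^m/n$ with high probability. Modelling $S$ as $k$ i.i.d.\ uniform elements of $G$ (which agrees with a uniform random $k$-subset up to $O(k^2/n) = o(1)$ error), one checks $\mathbb{E}[X_g] = k^m/n \geq n^{1/\varepsilon}$, a super-polynomial margin. One then expands $\mathbb{E}[X_g^2]$ as a sum over pairs $(I,J) \in ([k]^m)^2$, stratified by the coincidence pattern of the $2m$ indices: index-disjoint pairs reproduce $\mathbb{E}[X_g]^2$, while partial overlaps in which each tuple retains a free index still contribute $1/n^2$ (by integrating out the free coordinate first).

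The main obstacle is the residual contribution from pairs $(I,J)$ whose indices are ``totally nested'' (one tuple's index set contained in the other's), since here only $\Pr[\prod_I = \prod_J = g] \leq 1/n$ is immediate, and the combinatorial count of such pairs carries a spurious $m!$ factor that can inflate the variance---especially for near-abelian groups, where distinct orderings of the same letters yield the same product. Closing the estimate requires either exploiting group-theoretic input on how often a non-trivial word equation in random elements of $G$ is satisfiable, or passing to higher moments $\mathbb{E}[X_g^q]$ and using a Kim--Vu-style polynomial concentration inequality to translate the super-polynomial mean $\mathbb{E}[X_g]$ into an $o(1/n)$ tail bound. A union bound over $G$ then yields $\max_g X_g < 2k^m/n$ with high probability, completing the argument.
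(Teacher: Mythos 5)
This statement is not proved in the paper at all: it is quoted as background from Roichman \cite[Theorem 5.6]{Roi96}, where it is obtained by bounding the mixing time of random random walks, and the paper explicitly develops a different, growth-based combinatorial argument (Propositions \ref{pr1} and \ref{pr2}) precisely because the representation-counting route is hard to control. So your attempt must be judged on its own, and, as you yourself concede in the final paragraph, it is not a proof: the concentration of $X_g$ is left open, and that is exactly where all the difficulty of the theorem lives.

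The gap is in fact more serious than your description suggests. First, the identity $\mathbb{E}[X_g]=k^m/n$ is already unjustified: a tuple $I\in[k]^m$ with repeated indices yields a product whose distribution is group-dependent (e.g.\ $s_is_i=e$ in an elementary abelian $2$-group), and since $m^2/k\asymp(\log n)^{1-\varepsilon}/(\log\log n)^2\to\infty$ for $\varepsilon<1$, almost all of the $k^m$ tuples have repeated indices. Second, and fatally for the stated reduction, the inequality $\max_g X_g<2k^m/n$ is simply false for $G=(\mathbb{Z}/2\mathbb{Z})^d$: every tuple in which each index occurs an even number of times multiplies to the identity, and the perfectly paired tuples alone number about $k^{m/2}(m-1)!!$, which with your choice of $m$ exceeds $k^m/n$ by a factor of roughly $n^{1/2}$. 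So $|B(S)|\geq k^m/\max_g X_g$ gives nothing, and one must at least restrict to injective tuples $X'_g$. Even then the ``totally nested'' pairs you identify contribute $(k)_m\,m!/n$ to the second moment, and since $m!=n^{1/\varepsilon+o(1)}$ while $(k)_m\leq k^m=n^{1+1/\varepsilon}$, this term is of the same order as $\mathbb{E}[X'_g]^2=(k)_m^2/n^2$: there is no ``super-polynomial margin'', the computation is borderline (this is exactly where the sharp constant $2(1+\varepsilon^{-1})$ and the abelian lower bound of \cite[Proposition 5.7]{Roi96} come from), and Chebyshev followed by a union bound over $n$ elements cannot close it. Gesturing at group-theoretic input on word equations or at Kim--Vu concentration names the problem rather than solving it. By contrast, the paper's own machinery (Lemma \ref{le:OneStep} and Corollary \ref{co:Explicit}) sidesteps representation counting entirely: it only ever needs an upper bound on $|X^g\cap Y|$ for a single fresh random $g$, which is why it extends to arbitrary Schreier graphs.
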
 

Roichman's theorem is sharp in a number of directions.
First, it is clear that a bound of the type $O(\log_k n)$ is the best possible for the diameter of a $k$-regular graph on $n$ vertices.
Second, it is not hard to see \cite[Proposition 5.7]{Roi96} that, for an abelian group of order $n$, $(\log n)^{1+\varepsilon}$
elements are necessary to get such a bound $O(\log_k n)$.
Third, a constant of the type $O(\varepsilon^{-1})$ is the best possible for small $\varepsilon$ (we need to write $1+\varepsilon^{-1}$ rather than just $\varepsilon^{-1}$ for the bound to be true with $\varepsilon$ large).

It is natural to ask for a generalization to arbitrary transitive groups.
It is remarkable that Roichman obtains his theorem by studying the mixing time of random random walks (see also \cite{DH96}).
Besides being a tortuous road to give an estimate for the diameter,
it is not clear whether these ideas can be applied in the general framework:
they use many distinguished properties of Cayley graphs, in particular their vertex-transitivity.
Moreover, while it is natural that abelian groups represent the worst case for Cayley graphs,
this is not obvious in general: an abelian group only provides vertex-transitive Schreier graphs,
which is a very desirable property when bounding the directed diameter \cite{Bab06,Skr23}.
In this paper, we come back to combinatorics.
We provide a direct proof of the following theorem,
which settles the question of the diameter of random Schreier graphs up to the multiplicative constant.

\begin{theorem} \label{th:main}
For every $\varepsilon >0$ there exists $C>0$ such that the following holds.
Let $\Omega$ be a set of cardinality $n$, and let $G$ be any finite group acting transitively on $\Omega$.
Let $(\log n)^{1+\varepsilon}\leq k\leq n$,
and let $S \subseteq G$ be a random multiset of $k$ elements.
Then
$$
\diam ( \> \Sch(G\circlearrowleft\Omega,S) \> ) 
\> \leq \> 
C \> \frac{\log n}{\log k} 
$$
with probability $1-o(1)$ as $n\rightarrow\infty$.
\end{theorem}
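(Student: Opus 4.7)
The plan is a direct, combinatorial second-moment count of paths: for each ordered pair of vertices, count the length-$\ell$ walks connecting them, use a pairwise-independence property of products of independent uniform group elements to control the variance, and then union-bound over all pairs. Fix a constant $C=C(\varepsilon)>0$ to be determined, set $\ell:=\lceil C\log n/\log k\rceil$ and $m:=\lfloor k/\ell\rfloor$, and relabel $\ell m$ of the $k$ iid uniform samples as $s^{(i)}_j$ for $i\in\{1,\ldots,\ell\}$, $j\in\{1,\ldots,m\}$; since adding generators can only decrease the diameter, it is enough to bound the diameter of the Schreier graph built from these $\ell m$ samples alone. For each multi-index $\mathbf{i}=(i_1,\ldots,i_\ell)\in\{1,\ldots,m\}^\ell$ set
\[
w_\mathbf{i}\;:=\;s^{(1)}_{i_1}s^{(2)}_{i_2}\cdots s^{(\ell)}_{i_\ell}\;\in\;G.
\]
Given $(\omega_0,\omega^*)\in\Omega\times\Omega$, let $X:=\#\{\mathbf{i}:\omega_0\cdot w_\mathbf{i}=\omega^*\}$; if $X>0$ then the Schreier graph has an $\omega_0\to\omega^*$ path of length $\ell$, so it suffices to prove $\Pr[X=0]=o(n^{-2})$ and union-bound over the $n^2$ pairs.

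Each $w_\mathbf{i}$ is a product of $\ell$ independent uniform elements of $G$, hence itself uniform in $G$; by transitivity, $\Pr[\omega_0\cdot w_\mathbf{i}=\omega^*]=1/n$, so $\mathbb{E}X=m^\ell/n$. The heart of the argument is the following pairwise-independence lemma: for $\mathbf{i}\neq\mathbf{i}'$ in $\{1,\ldots,m\}^\ell$, the pair $(w_\mathbf{i},w_{\mathbf{i}'})$ is uniformly distributed on $G\times G$. To see this, choose a coordinate $r$ with $i_r\neq i_r'$ and condition on every sample other than $s^{(r)}_{i_r}$ and $s^{(r)}_{i_r'}$: one can then write $w_\mathbf{i}=a\cdot s^{(r)}_{i_r}\cdot b$ and $w_{\mathbf{i}'}=a'\cdot s^{(r)}_{i_r'}\cdot b'$ for determined elements $a,a',b,b'\in G$, and the map $(x,y)\mapsto(axb,a'yb')$ is a bijection on $G\times G$ that carries the independent uniform pair $(s^{(r)}_{i_r},s^{(r)}_{i_r'})$ to $(w_\mathbf{i},w_{\mathbf{i}'})$. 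Hence the events $\{\omega_0\cdot w_\mathbf{i}=\omega^*\}$ are pairwise independent with common probability $1/n$, which gives $\mathrm{Var}(X)\leq\mathbb{E}X=m^\ell/n$, and Chebyshev's inequality then yields $\Pr[X=0]\leq n/m^\ell$.

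It only remains to tune $C=C(\varepsilon)$ so that $m^\ell\geq n^{3+\delta}$ for some $\delta>0$ and all sufficiently large $n$: this gives $\Pr[X=0]\leq n^{-2-\delta}$ for each pair, and hence $n^{-\delta}=o(1)$ after the union bound. Writing $\log m^\ell=\ell\log k-\ell\log\ell$ and using $k\geq(\log n)^{1+\varepsilon}$, which yields $\log\log n/\log k\leq 1/(1+\varepsilon)$, one obtains
\[
\log m^\ell\;\geq\;\frac{C\varepsilon}{1+\varepsilon}\log n\;-\;o(\log n),
\]
so any $C>3(1+1/\varepsilon)$ works. The main (and essentially only) obstacle to this plan is the pairwise-independence lemma: it requires $w_\mathbf{i}$ to be a right-ordered product of \emph{different} independent uniform samples at each coordinate, which is precisely what forces the factorisation of $S$ into the blocks $\{s^{(i)}_j\}_j$ and what enables the change-of-variable step. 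Everything else is routine, and the $O(\varepsilon^{-1})$ dependence of $C$ on $\varepsilon$ recovers Roichman's constant, because the bound $\Pr[X=0]\leq n/\mathbb{E}X$ is tight up to constants for pairwise-independent counts of rare events.
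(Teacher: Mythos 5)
Your argument is correct, but it takes a genuinely different route from the paper's. The paper works in the growth-in-groups style: a double-counting lemma on random conjugates (Lemma \ref{le:doublecount}) shows that a random translate $X^g$ contributes almost $|X|$ new points while $X$ is small; iterating this over a bounded partition of $S$ produces a sphere of density at least $k^{-\varepsilon/2}$ around a \emph{single} fixed base point (Proposition \ref{pr1}); a coupon-collector lemma then fills $\Omega$; and, because the failure probabilities in that argument are only $o(1)$ --- far too large to union-bound over base points --- the directedness is handled by the separate observation that $A$ and $A^{-1}$ are equidistributed, giving a two-sided bound $2t+2$ (Lemmas \ref{le:fill} and \ref{lemFil2}). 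You instead run a second-moment count of length-$\ell$ walks: splitting $S$ into $\ell$ blocks of size $m$ makes the $m^\ell$ ordered products pairwise independent and uniform on $G$ (the Erd\H{o}s--R\'enyi cube argument, which is sound as you state it --- the conditional change of variables $(x,y)\mapsto(axb,a'yb')$ is a bijection of $G\times G$), Chebyshev gives per-pair failure probability at most $n/m^\ell$, and since this is polynomially small it survives a union bound over all $n^2$ ordered pairs, so no inversion trick, no filling lemma and no growth machinery are needed. Your exponent computation is right: $\ell\log m\geq C\log n\left(\tfrac{\varepsilon}{1+\varepsilon}-o(1)\right)$, so any $C>3(1+\varepsilon^{-1})$ works; this is in fact a \emph{better} dependence on $\varepsilon$ than the paper's $O(\varepsilon^{-3})$, and matches Roichman's constant up to replacing $2$ by $3$, the price of union-bounding over ordered pairs rather than over targets from the identity. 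Two cosmetic points: the closing inequality should read $\mathbb{P}[X=0]\leq 1/\mathbb{E}X=n/m^\ell$ rather than $n/\mathbb{E}X$, and one should note that $m=\lfloor k/\ell\rfloor\to\infty$ (which follows from $k\geq(\log n)^{1+\varepsilon}$) so the floor is harmless; neither affects correctness. What the paper's longer route buys is the intermediate structural output --- large spheres from few random elements and the filling lemmas --- which the authors present as being of independent interest.
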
 

The diameter of a multigraph equals the diameter of the underlying graph,
so Theorem \ref{th:main} implies the corresponding version for random sets.
(The distribution of random $k$-multisets is not the same as the distribution of random $k$-sets, but this is only a minor technical inconvenience.)

Our proof yields $C=O(\varepsilon^{-3})$, but can be modified to yield $C=O_{\delta}(\varepsilon^{-1-\delta})$ for any $\delta>0$: see Remark~\ref{re:cepsilon} below. The special case of Cayley graphs already shows that one cannot do better than $C=O(\varepsilon^{-1})$, so $k$ in Theorem~\ref{th:main} cannot be extended to slower functions like $k=\log n\log\log n$. We also highlight the validity of Theorem~\ref{th:main} for all large $k$,
for example $k= \lfloor n^\delta \rfloor$ with fixed $\delta>0$.
In this case, we obtain that the diameter is bounded by $C \delta^{-1}$, where $C$ is an absolute constant.

\begin{remark}
The $k$ random elements in Theorem \ref{th:main} are not able to generate the group $G$ in general,
as there are transitive permutation groups of degree $n$ that require up to roughly $n/\sqrt{\log n}$ generators \cite{KN88,LMM00}.
In fact, the result in \cite{Sab22} implies that $O(\log n)$ random elements generate a {\itshape transitive subgroup},
with probability $1-o(1)$ as $n\rightarrow\infty$,
and that in that case the diameter bound is $O(\log n)$. Theorem \ref{th:main} shows that $(\log n)^{1+\varepsilon}$ random elements provide the optimal generation almost surely, in the sense that the bound is the stronger $O(\log_{k}n)$.
\end{remark}

\vspace{0.1cm}
\section{Growth in random Schreier graphs} \label{sec2} 

The proof of Theorem \ref{th:main} is inspired by the more modern philosophy of growth in groups.
We refer the reader to the beautiful survey \cite{Hel15} for an introduction to this vibrant subject,
and to \cite{Mur19} for some generalizations in the context of group actions (i.e. Schreier graphs).
We believe that this article is yet another demonstration of the power of these techniques.
We follow two main steps, which we explain in Propositions \ref{pr1} and \ref{pr2} below.
When $A \subseteq G$ and $t \in \mathbb{N}$,
we write $A^t = \{ a_1 \cdot \ldots \cdot a_t : a_1, \ldots, a_t \in A\}$ for the set of products of $t$ elements of $A$.
 If $\omega \in \Omega$, 
 we write $\Sf_A(\omega,t)$ to denote the sphere of radius $t$ in $\Sch(G \circlearrowleft \Omega,A)$ and centered in $\omega$, i.e.
$$ \Sf_A(\omega,t) \> = \> \omega^{(A^{t})}
\> = \> \{\omega^{g}:g\in A^{t}\} . $$
We remark that $\Sf_A(\omega,t) = \Sf_{A^t}(\omega,1)$ for all $t \in \mathbb{N}$.
Moreover,
\begin{equation} \label{eqDiam}
\diam( \> \Sch(G \circlearrowleft \Omega,A) \> ) \> = \> \max_{\omega \in \Omega} \min \{ t \in \mathbb{N} : \Sf_A(\omega,t) = \Omega \} . 
\end{equation} 
We write $A \sim \mu_G(k)$ to say that $A \subseteq G$ is a random multiset of size $k$ chosen with the uniform distribution.

\begin{proposition}[Growth] \label{pr1}
For every $\varepsilon >0$ there exists $C>0$ such that the following holds.
Let $G$ be any finite group acting transitively on a set $\Omega$ with $|\Omega|=n$,
and let $(\log n)^{1+\varepsilon}\leq k\leq n$.
Fix $\omega \in \Omega$. Then, for $A\sim \mu_{G}(k)$, we have
$$ \left|\Sf_A\left(\omega, \left\lfloor C \log_k n \right\rfloor \right)\right| 
\> \geq \>
 \frac{n}{k^{\varepsilon/2}} $$
with probability $1-o(1)$ as $n\rightarrow\infty$.
\end{proposition}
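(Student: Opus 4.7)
The plan is to exploit the independence within the random multiset $A$ by partitioning $A$ into $t$ disjoint pieces and running a BFS-style argument, one piece per step. Set $t := \lfloor C \log_k n \rfloor$ for a constant $C = C(\varepsilon)$ to be chosen sufficiently large, partition $A = A_1 \sqcup \cdots \sqcup A_t$ with each $|A_i| = \lfloor k/t \rfloor$, and define $T_0 := \{\omega\}$ and $T_i := T_{i-1}^{A_i}$. Then $T_i = \omega^{A_1 A_2 \cdots A_i} \subseteq \omega^{A^i} = \Sf_A(\omega,i)$, so it suffices to prove that $|T_t| \geq n/k^{\varepsilon/2}$ with high probability. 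The virtue of this decomposition is that, up to a standard reduction of the uniform-multiset distribution to iid uniform sampling from $G$, the pieces $A_1, \ldots, A_t$ are mutually independent, so at step $i$ one can condition freely on $T_{i-1}$ and treat $A_i$ as fresh randomness.

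The core one-step estimate is as follows. Transitivity of $G$ on $\Omega$ gives, for a uniform $g \in G$ and any fixed $\alpha \in \Omega$, $\Pr[\alpha \in T^g] = |T|/n$ whenever $T \subseteq \Omega$ is fixed. Independence of the $\lfloor k/t \rfloor$ elements of $A_i$ then yields, for every $\alpha \in \Omega$,
$$
\Pr\bigl[\alpha \in T_i \mid T_{i-1}\bigr] \> = \> 1 - \bigl(1 - |T_{i-1}|/n\bigr)^{\lfloor k/t \rfloor},
$$
and hence $\mathbb{E}[|T_i| \mid T_{i-1}] \geq |T_{i-1}| \cdot k/(2t)$ as long as $|T_{i-1}| \leq nt/k$. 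Viewed as a function of the $\lfloor k/t \rfloor$ independent elements of $A_i$, $|T_i|$ has bounded differences of size at most $|T_{i-1}|$, so a Doob/Azuma-type inequality yields $|T_i| \geq \tfrac{k}{4t}\, |T_{i-1}|$ with conditional failure probability at most $\exp(-\Omega(k/t))$. The hypothesis $k \geq (\log n)^{1+\varepsilon}$ forces $k/t \geq (1+\varepsilon)(\log n)^{\varepsilon}\log\log n /C$, so the failure probability is $o(1/t)$, easily tolerating a union bound over all $t$ steps.

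Iterating these one-step estimates on the favorable event, $|T_i|$ grows by a factor of at least $k/(4t)$ per step until it exceeds the threshold $nt/k$, at which point $\mathbb{E}[n - |T_{i+1}| \mid T_i] \leq (n - |T_i|)/e$; a further $O(\log k)$ saturation steps drive $\mathbb{E}[n - |T_t|]$ below, say, $n/(2k^{\varepsilon/2})$, and a final Markov inequality delivers $|T_t| \geq n/k^{\varepsilon/2}$ with high probability. Counting steps using $\log k \geq (1+\varepsilon) \log\log n$, the geometric phase needs at most $\log n/(\varepsilon \log\log n) + o(t)$ iterations and the saturation phase needs $O(\log k) = o(t)$; both fit inside $t = \lfloor C \log_k n \rfloor$ provided $C \geq 2(1+\varepsilon)/\varepsilon$.

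The main obstacle is balancing the partition parameter: $t$ must be large enough that the accumulated growth $(k/t)^t$ comfortably beats the target $n/k^{\varepsilon/2}$, yet small enough that each piece $A_i$ retains $k/t \to \infty$ elements so that the one-step Azuma bound survives a union bound over $t$ steps. The hypothesis $k \geq (\log n)^{1+\varepsilon}$ is precisely what makes both constraints simultaneously tight, with the slack $\varepsilon$ entering in the denominator of $C(\varepsilon)$ and, critically, in the gap $\log(k/t) \gtrsim \varepsilon \log\log n$ that controls both the geometric growth exponent and the concentration budget.
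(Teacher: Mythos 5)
Your geometric phase is a genuinely different (and valid) route to one-step growth: where the paper uses a deterministic double-counting lemma plus a union bound over the elements of $A_i$ (Lemma \ref{le:doublecount} and Lemma \ref{le:OneStep}), you use the exact first moment $\mathbb{P}[\alpha\in T^g]=|T|/n$ together with a bounded-differences inequality. That part of your argument is sound: conditionally on $T_{i-1}$ with $|T_{i-1}|\leq n/m$ (where $m=\lfloor k/t\rfloor$), you do get $|T_i|\geq \tfrac{m}{4}|T_{i-1}|$ except with probability $e^{-\Omega(m)}$, and the union bound over the $t$ steps goes through.

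The genuine gap is in the endgame. The geometric phase can only be run while $|T_{i-1}|\leq n/m$, and $m=k\log k/(C\log n)$ is \emph{always} much larger than $k^{\varepsilon/2}$ under the hypothesis $k\geq(\log n)^{1+\varepsilon}$ (indeed $k^{1-\varepsilon/2}\geq k^{1/(1+\varepsilon)}\geq\log n$ for $\varepsilon\in(0,1)$), so reaching size $n/m$ never suffices and your saturation phase is essential. But that phase does not close: (i) iterating $\mathbb{E}[\,n-|T_{i+1}|\mid T_i]\leq (n-|T_i|)/e$ genuinely requires $\Theta(\log k)$ steps to beat the factor $k^{\varepsilon/2}$, and $\log k=o(t)=o(\log n/\log k)$ fails whenever $(\log k)^2\gg\log n$ --- e.g.\ for $k=n^{\delta}$ you have $t=O(1)$ while $\log k=\Theta(\log n)$; (ii) stopping at $\mathbb{E}[n-|T_t|]\leq n/(2k^{\varepsilon/2})$ and applying Markov only gives success probability $1/2$, not high probability; and (iii) you cannot substitute concentration here, because once $|T_{i-1}|>n/m$ the bounded-differences constant $|T_{i-1}|$ is too large relative to the deviation you need (McDiarmid gives $\exp(-\Theta(\lambda^2/(m|T_{i-1}|^2)))$, which is useless when the required gain $\lambda$ is no longer $\Theta(m|T_{i-1}|)$). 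The repair is essentially the paper's own second phase (the deduction of Proposition \ref{pr1} from Lemma \ref{lemGS}): reserve $O(\varepsilon^{-1})$ extra pieces of the much smaller size $\approx k^{\varepsilon/2}$, for which the one-step growth estimate (your version or Corollary \ref{co:Explicit}) remains valid for sets as large as $n/k^{\varepsilon/2}$; this costs only $O(1)$ additional radius and $o(k)$ additional elements. Without some such rescaling of the piece size near the target, your argument as written does not establish the stated bound for all admissible $k$.
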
 

At this stage, we stress that the multiplicative gap between $|\mathcal{S}_A(\omega,t)|$ and $|\Omega|$ depends on $k$
(and is unbounded).
Once we have a large sphere, we bound the diameter with the help of a few more random elements.

\begin{proposition}[Diameter bound] \label{pr2}
Let $\delta>0$, and let $G,\Omega,n,k$ be as above.
Suppose that, for some $\omega \in \Omega$,
for $A \sim \mu_G(k)$ we have $|\Sf_A(\omega,t)| \geq |\Omega|/k^\delta$
with probability $1-o(1)$ as $n\rightarrow\infty$.
Then, for $B \sim \mu_G(2k + 8k^\delta \log n)$, we have
$$ \diam( \> \Sch(G \circlearrowleft \Omega, B) \> ) \> \leq \> 2t+2 $$
with probability $1-o(1)$ as $n\rightarrow\infty$.
\end{proposition}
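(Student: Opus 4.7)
The plan is to partition $B$ into three independent random multisets $A_1, A_2 \sim \mu_G(k)$ and $C \sim \mu_G(8k^\delta \log n)$, and to funnel every directed path through a single hub vertex $\omega_0 \in \Omega$, namely the point supplied by the hypothesis. Concretely, I aim to show that with high probability every $\omega \in \Omega$ reaches $\omega_0$ in exactly $t+1$ steps in $B$, and $\omega_0$ reaches every $\omega' \in \Omega$ in exactly $t+1$ steps in $B$; concatenating the two halves then gives $\omega' \in \Sf_B(\omega,2t+2)$ for all pairs, so $\Sf_B(\omega,2t+2)=\Omega$ for every $\omega$, which is the diameter bound via \eqref{eqDiam}.

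The hypothesis directly provides $|\Sf_{A_1}(\omega_0,t)| \geq n/k^\delta$ with high probability. For the reverse direction, inversion is a bijection on $G$, so $A_2^{-1} \sim \mu_G(k)$ as well; applying the hypothesis to $A_2^{-1}$ gives $|\Sf_{A_2^{-1}}(\omega_0,t)| \geq n/k^\delta$ with high probability. Unfolding the definitions, $X := \Sf_{A_1}(\omega_0,t)$ is the set of vertices reached from $\omega_0$ in exactly $t$ steps in the Schreier graph of $A_1$, while $Y := \Sf_{A_2^{-1}}(\omega_0,t)$ is the set of vertices that reach $\omega_0$ in exactly $t$ steps in the Schreier graph of $A_2$.

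It remains to use $C$ to bridge $X$ and $Y$ to arbitrary targets in one additional step. For a fixed target $\omega' \in \Omega$, a path $\omega_0 \to \omega'$ of length exactly $t+1$ in $B$ exists whenever $(\omega')^{c^{-1}} \in X$ for some $c \in C$. By transitivity of the $G$-action, for uniform $c \in G$ the element $(\omega')^{c^{-1}}$ is uniformly distributed in $\Omega$, so a single $c$ succeeds with probability $|X|/n \geq k^{-\delta}$. With $|C| = 8k^\delta \log n$ independent random elements, the probability that no $c \in C$ works for a given $\omega'$ is at most $(1-k^{-\delta})^{8k^\delta \log n} \leq n^{-8}$, and a union bound over the $n$ targets bounds the failure probability by $n^{-7}$. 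Symmetrically, for each starting point $\omega$ one wants some $c \in C$ with $\omega^c \in Y$, and the identical estimate applies.

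Summing the four failure probabilities (the two half-spheres being too small and the two one-step coverage events failing) yields an overall $o(1)$ bound, finishing the argument. The main obstacle is conceptual rather than computational: because the hypothesis controls only a single sphere, the diameter bound must be realized as a detour through one distinguished hub $\omega_0$, and one must recognize that the inversion symmetry of $\mu_G(k)$ converts the forward hypothesis into a backward reachability statement around $\omega_0$. Once that reduction is in place, the remaining probabilistic content is a one-step coverage estimate enabled by the transitivity of the action.
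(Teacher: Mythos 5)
Your proof is correct and uses essentially the same two ingredients as the paper's own argument --- the one-step coverage estimate (Lemma \ref{le:fill}) and the inversion-symmetry/hub trick (Lemma \ref{lemFil2}) --- merely reordered: the paper first fills the sphere to all of $\Omega$ and then doubles via inversion, whereas you invert first and then bridge both half-spheres with a shared set $C$. The element count $2k+8k^\delta\log n$ and the probability estimates match the paper's.
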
 

In order to use Proposition \ref{pr2} to prove Theorem \ref{th:main}, it is important to notice that $k^\delta \log n$ is comparable with $k$,
if $k$ is larger than $(\log n)^{1+\varepsilon}$ and $\delta$ is small enough with respect to $\varepsilon$.
In the remaining part of this section we prove Proposition \ref{pr1},
while Proposition \ref{pr2} and Theorem \ref{th:main} will be proven in Section \ref{se:fill}.

\subsection{Random conjugates} 

Let $X,Y \subseteq \Omega$, and let $g \in G$ be random.
It is intuitive that, if $\Omega$ is large, then a random conjugate $X^g$ of $X$ intersects $Y$ in a few points with high probability.
This is the content of the following lemma.

\begin{lemma} \label{le:doublecount} 
Let $G$ be any finite group acting transitively on a set $\Omega$.
          Let $r,s>0$ and $X,Y \subseteq \Omega$.
          Suppose that for at least $|G|/s$ of the elements $g \in G$ we have $|X^g \cap Y| \geq r$.
          Then $|\Omega| \leq \frac{|X||Y|s}{r}$.
\end{lemma}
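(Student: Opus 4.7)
The plan is to prove the lemma by double-counting the set
$$ P \> = \> \{(g,y) \in G \times Y : y \in X^g\} , $$
using the hypothesis to lower-bound $|P|$ and transitivity (through orbit-stabilizer) to upper-bound it.

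First, I would use the hypothesis directly: the set of $g \in G$ with $|X^g \cap Y| \geq r$ has size at least $|G|/s$, and each such $g$ contributes at least $r$ pairs to $P$, so
$$ |P| \> \geq \> \frac{|G| \cdot r}{s} . $$

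Second, I would count $P$ by fixing $y \in Y$ and looking at the fiber $\{g : y \in X^g\}$. The condition $y \in X^g$ means $y = x^g$ for some $x \in X$. For each individual $x \in X$, the set $\{g \in G : x^g = y\}$ is either empty or a right coset of the stabilizer $G_x$; since the action is transitive, orbit-stabilizer gives $|G_x| = |G|/|\Omega|$. Hence
$$ |\{g : y \in X^g\}| \> \leq \> \sum_{x \in X} |\{g : x^g = y\}| \> \leq \> |X| \cdot \frac{|G|}{|\Omega|} , $$
and summing over $y \in Y$ yields $|P| \leq |X||Y||G|/|\Omega|$. Comparing the two bounds and cancelling $|G|$ gives the desired $|\Omega| \leq |X||Y|s/r$.

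There is no real obstacle here: the only subtle point is choosing the right object to double-count, namely the pairs $(g,y)$ rather than $(g,x)$ or triples $(g,x,y)$, so that the hypothesis about the cardinality of $X^g \cap Y$ plugs in immediately on one side, and transitivity kicks in cleanly on the other. Everything else is bookkeeping via orbit-stabilizer.
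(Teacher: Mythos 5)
Your proposal is correct and is essentially the paper's own argument: both double-count the pairs $(g,y)$ with $y \in X^g$, lower-bounding via the hypothesis and upper-bounding via orbit--stabilizer (the paper notes the fiber count is exactly $|X|\cdot|G|/|\Omega|$ since distinct $x$ give disjoint solution sets, but your inequality suffices). No issues.
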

\begin{proof} 
          Let $t$ be the cardinality of a stabilizer, namely $t=|G|/|\Omega|$.
         Let $y \in Y$.
         For each $x \in X$, there are precisely $t$ elements $g \in G$ such that $y=x^g$.
         Therefore, each element of $Y$ lies in the same number $t|X|$ of translates $(X^g)_{g \in G}$, and so
          $$ |\{ (y,g) \in Y \times G :  y \in X^g \}| \> = \> t|X||Y| . $$
          On the other hand, we have
        $$
    |\{ (y,g) \in Y \times G :  y \in X^g \}| 
    \> = \>
    \sum_{g \in G} |X^g \cap Y|
     \> \geq \>
      \frac{|G|r}{s} .
    $$
    The proof follows.
\end{proof} 

We can read Lemma \ref{le:doublecount} as follows. If
        \begin{equation} \label{eqCond2}
            |Y| \> < \> \frac{|\Omega|r}{|X|s} ,
        \end{equation}
       then there are at least $|G|(1-s^{-1})$ elements $g \in G$ such that $|X^g \cap Y|<r$.
       The next key result exploits this fact to give a glimpse of growth.
       For a random distribution $\mu$,
       we write $\mathbb{P}_{\mu}$ to denote the probability of a certain event with respect to $\mu$.

\begin{lemma}[One-step growth] \label{le:OneStep}
Let $G$ be any finite group acting transitively on a set $\Omega$.
Let $r,s >0$ and $X \subseteq \Omega$,
and let $k$ be an integer such that
        \begin{equation} \label{eqCond1} 
            |\Omega| \> \geq \> 
    \frac{k s |X|^2 }{r} . 
        \end{equation}
Then, for $A\sim \mu_{G}(k)$, we have
$$
\mathbb{P}_{\mu_G(k)} \left( \> \left| X^A \right| \geq k \left( |X| - r \right) \> \right)
\> \geq \> 
\left( 1- \frac{1}{s} \right)^k.
$$
\end{lemma}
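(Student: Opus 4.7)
The plan is a one-element-at-a-time argument. I would realize $A$ as an ordered tuple $(a_1, \ldots, a_k)$ of independent uniform samples from $G$ (the set $X^A$ depends only on the underlying multiset, so this reinterpretation is harmless for our event) and set $Y_i := X^{a_1} \cup \cdots \cup X^{a_{i-1}}$, with $Y_1 = \emptyset$ and $Y_{k+1} = X^A$. Since the action is by bijections, $|X^{a_i}| = |X|$, so whenever $|X^{a_i} \cap Y_i| < r$ the union gains at least $|X| - r$ new points; if this ``small intersection'' event holds for every $i \leq k$, then $|X^A| \geq k(|X| - r)$, which is exactly the conclusion we want.

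The per-step estimate is precisely what Lemma \ref{le:doublecount} delivers in the form of the remark following (\ref{eqCond2}). First I would verify the hypothesis at step $i$: trivially $|Y_i| \leq (i-1)|X| < k|X|$, and the assumption (\ref{eqCond1}) gives $k|X| \leq |\Omega| r / (|X| s)$, so (\ref{eqCond2}) holds with $Y = Y_i$. Consequently, for any fixed realization of $a_1, \ldots, a_{i-1}$ (which determines $Y_i$), the fraction of $g \in G$ with $|X^g \cap Y_i| \geq r$ is at most $1/s$, so
$$ \mathbb{P}\bigl(\, |X^{a_i} \cap Y_i| < r \,\bigm|\, a_1, \ldots, a_{i-1} \,\bigr) \> \geq \> 1 - \frac{1}{s} . $$

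Chaining these conditional bounds by the tower property (or by a direct induction on $k$) gives that all $k$ small-intersection events occur simultaneously with probability at least $(1 - 1/s)^k$, and on that event $|X^A| \geq k(|X| - r)$. I do not foresee any real obstacle: Lemma \ref{le:doublecount} is calibrated to yield exactly the one-step bound we need, and the independence of the samples handles the rest. The only piece of care is the multiset-versus-tuple distinction flagged in the earlier footnote, but since $|X^A|$ is invariant under reordering of $A$ this is purely bookkeeping.
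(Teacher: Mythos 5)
Your argument is correct and is essentially the paper's own proof: the paper runs the same one-conjugate-at-a-time argument as an induction on $k$, using Lemma \ref{le:doublecount} via (\ref{eqCond2}) with $Y = X^{\{g_1,\ldots,g_{k-1}\}}$ and the same bound $|Y| < k|X|$ to gain at least $|X|-r$ new points per step with conditional probability at least $1-s^{-1}$. Your unrolled tuple/chaining formulation is just the non-inductive phrasing of the identical argument, and your handling of the multiset-versus-tuple point matches the paper's implicit convention.
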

\begin{proof}
  We work by induction on $k$, the case $k=1$ being trivial.
    Let $k \geq 2$, and suppose that (\ref{eqCond1}) holds.
    By induction, for $A'\sim\mu_{G}(k-1)$, we have
\begin{equation}\label{eq:muk-1}
\mathbb{P}_{\mu_{G}(k-1)}\left( \left| X^{A'} \right|
\> \geq \>
 (k-1)\left(|X|-r\right)\right)
 \> \geq \>
 \left( 1- \frac{1}{s} \right)^{k-1} .
\end{equation}
    For any evaluation $(g_1,\ldots,g_{k-1})$ of $A'$, we set
$$ Y \> = \> X^{\{g_1, \ldots ,g_{k-1}\}} \> = \> \bigcup_{i=1}^{k-1} X^{g_i} . $$
Certainly $|Y| < k|X|$, and so it is easy to check that (\ref{eqCond2}) follows from (\ref{eqCond1}).
Take a random $g_{k}\in G$, which is the same as considering $\{g_{k}\}\sim\mu_{G}(1)$.
We obtain
\begin{equation}\label{eq:mu1}
\mathbb{P}_{\mu_{G}(1)} \biggl( |X^{g_k} \cup Y| 
\> \geq \>
 |X| + |Y| - r \biggr)
  \> \geq \> 
  1- \frac{1}{s} .
\end{equation}
Since $A'\sim\mu_{G}(k-1)$ and $\{g_{k}\}\sim\mu_{G}(1)$, we have $A=A'\cup\{g_{k}\}\sim\mu_{G}(k)$.
Putting together (\ref{eq:muk-1}) and (\ref{eq:mu1}),
    with probability at least $(1-s^{-1})^k$,
    we have
\begin{align*}
	\left| X^A \right| & \> = \> 
	\left| X^{A'} \cup X^{g_k} \right| \\ & \> \geq \> 
	(k-1)\left(|X|-r\right) + |X| - r \\ & \> = \> 
  	|X|k -rk \> 
	\end{align*} 
    as desired.
          \end{proof} 

         For the convenience of the reader, we state a special version of Lemma~\ref{le:OneStep}, which is the one we will use later.
         
         \begin{corollary} \label{co:Explicit} 
         Let $d \geq 2$.
Let $G$ be any finite group acting transitively on a set $\Omega$.
Let $X \subseteq \Omega$, and let $k\geq 4$ be an integer such that
\begin{equation}\label{eq:explcondition}
|X| \> \leq \> \frac{|\Omega|}{k^{d}} .
\end{equation}
Then, for $A\sim \mu_{G}(k)$, we have
$$ \mathbb{P}_{\mu_G(k)}\left( \> \left|X^A \right| \geq \sqrt{k}|X| \> \right) 
\> \geq \> 
\left( 1 - \frac{2}{k^{d-1}} \right)^k . $$
\end{corollary}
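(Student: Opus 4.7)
The plan is to derive Corollary \ref{co:Explicit} as a direct specialization of Lemma \ref{le:OneStep} by making an explicit choice of the auxiliary parameters $r$ and $s$. The task reduces to reverse-engineering these parameters so that (a) the probability lower bound $(1-1/s)^k$ matches the target $(1-2/k^{d-1})^k$, (b) the hypothesis $|\Omega|\geq ks|X|^2/r$ of Lemma \ref{le:OneStep} is implied by $|X|\leq |\Omega|/k^d$, and (c) the conclusion $|X^A|\geq k(|X|-r)$ implies $|X^A|\geq \sqrt{k}|X|$.

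For (a), the natural choice is $s=k^{d-1}/2$, so that $1/s=2/k^{d-1}$. For (c), the simplest choice is $r=|X|/2$, which gives $k(|X|-r)=k|X|/2$; and the inequality $k|X|/2\geq\sqrt{k}|X|$ is equivalent to $\sqrt{k}\geq 2$, which holds precisely because we have assumed $k\geq 4$. For (b), with these values we compute
\[
\frac{ks|X|^2}{r} \> = \> \frac{k\cdot(k^{d-1}/2)\cdot|X|^2}{|X|/2} \> = \> k^{d}|X|,
\]
so the hypothesis of Lemma \ref{le:OneStep} becomes $|\Omega|\geq k^{d}|X|$, which is exactly \eqref{eq:explcondition}.

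Putting these together, Lemma \ref{le:OneStep} yields
\[
\mathbb{P}_{\mu_G(k)}\!\left( |X^A| \geq \tfrac{k}{2}|X| \right) \> \geq \> \left(1-\tfrac{2}{k^{d-1}}\right)^{k},
\]
and since $\frac{k}{2}|X|\geq\sqrt{k}|X|$ for $k\geq 4$, the event on the left implies $|X^A|\geq\sqrt{k}|X|$, proving the corollary.

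There is essentially no obstacle here, as the argument is a bookkeeping exercise: the only substantive point is the parameter choice, and the $k\geq 4$ assumption is what makes the square-root growth rate $\sqrt{k}|X|$ cleaner than the slightly sharper bound $k(1-1/\sqrt{k})|X|$ one could alternatively extract by instead choosing $r=|X|(1-1/\sqrt{k})$.
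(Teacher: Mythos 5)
Your proposal is correct and follows essentially the same route as the paper: both apply Lemma \ref{le:OneStep} with $s=k^{d-1}/2$, the only difference being that the paper takes $r=|X|(1-1/\sqrt{k})$ (noting $r\geq|X|/2$ for $k\geq 4$) while you take $r=|X|/2$ outright, which is an immaterial variation you yourself point out. No issues.
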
 
\begin{proof}
Apply Lemma \ref{le:OneStep}, with
\begin{align*}
r & = |X|\left( 1 - \frac{1}{\sqrt{k}} \right), 
&
s & = \frac{k^{d-1}}{2} ,
\end{align*}
and note that $r\geq|X|/2$ by our choice of $k$.
\end{proof}

\subsection{Growth of spheres} 

We start estimating the size of random spheres.
To do so, we divide the random set $A$ into a bounded number of pieces of comparable size,
and use the trivial observation that, if $A_1,A_2 \subseteq A$, then $A_1A_2 \subseteq A^2$.
This allows to create apparently new random elements while we are growing.

\begin{lemma} \label{lemGS} 
For every $\varepsilon \in (0,1/2)$ there exists $C >0$ such that the following holds.
Let $G$ be any finite group acting transitively on a set $\Omega$ with $|\Omega|=n$,
and let $(\log n)^{1+\varepsilon}\leq k\leq n$.
Fix $\omega \in \Omega$.
Then, for $A\sim \mu_{G}(k)$, we have
$$
\left|\Sf_A\left(\omega, \left\lfloor C \log_k n \right\rfloor \right)\right| \> \geq \> \frac{n}{k^{2/\varepsilon}}
$$
with probability $1-o(1)$ as $n\rightarrow\infty$.
\end{lemma}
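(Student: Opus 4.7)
The plan is to exploit the observation emphasized at the start of the section: if $A = A_1 \sqcup \cdots \sqcup A_T$, then $A_1 A_2 \cdots A_T \subseteq A^T$, so $\omega^{A_1 \cdots A_T} \subseteq \Sf_A(\omega, T)$. Accordingly, I would take $T = \lceil C(\varepsilon) \log_k n \rceil$ for a constant $C(\varepsilon)$ to be chosen, split the random multiset $A$ into $T$ independent uniform random multisets $A_1, \ldots, A_T$ each of size $m = \lfloor k/T \rfloor$, and define $X_0 = \{\omega\}$ and $X_{i+1} = X_i^{A_{i+1}}$. Since $X_i \subseteq \omega^{A_1 \cdots A_i} \subseteq \Sf_A(\omega, i)$, the lemma reduces to a lower bound on $|X_T|$, and the target radius $T$ already matches $\lfloor C\log_k n\rfloor$ up to constants.

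At each step I would feed $X_i$ and the fresh piece $A_{i+1}$ into Corollary \ref{co:Explicit} (with $m$ in the role of its $k$ and a suitably chosen $d = d(\varepsilon, k, n)$): as long as $|X_i| \le |\Omega|/m^d$, one obtains $|X_{i+1}| \ge \sqrt{m}\,|X_i|$ with probability at least $(1 - 2/m^{d-1})^m$. By independence of the pieces, the probability that all $T$ applications succeed is at least $(1 - 2/m^{d-1})^{mT} = (1 - 2/m^{d-1})^k$. Conditional on this good event, either $|X_i|$ exits the Corollary regime at some intermediate step, which already yields $|X_i| > n/m^d$, or the iteration completes all $T$ steps and $|X_T| \ge m^{T/2}$.

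The pivot of the argument is the choice $d \approx (2/\varepsilon)\log k / \log m$, so that $m^d$ is essentially $k^{2/\varepsilon}$ and both possible outcomes above deliver $|X_T| \gtrsim n/k^{2/\varepsilon}$ (with $C(\varepsilon)$ picked large enough that $m^{T/2} \ge n/k^{2/\varepsilon}$, which amounts to $C(\varepsilon) \gtrsim (1+\varepsilon)/\varepsilon$ in the tightest regime). This choice also produces a clean formula for the failure probability: we have $m^{d-1} \gtrsim k^{2/\varepsilon}/m$, so
\[
(1 - 2/m^{d-1})^k \;\ge\; 1 - \frac{2mk}{k^{2/\varepsilon}} \;=\; 1 - \frac{2m}{k^{2/\varepsilon - 1}} \;\ge\; 1 - \frac{2}{k^{2/\varepsilon - 2}},
\]
which tends to $1$ precisely because $\varepsilon < 1$ makes $2/\varepsilon - 2 > 0$ and $k \ge (\log n)^{1+\varepsilon} \to \infty$. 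This is exactly where the hypothesis $\varepsilon \in (0,1)$ enters the argument.

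The main obstacle I expect is the borderline regime $k = (\log n)^{1+\varepsilon}$, where $m$ is only of order $(\log n)^{\varepsilon}$ and $T$ is of order $\log n / \log \log n$, so the $\log\log n$ corrections inside $\log m = \log k - \log T$ have to be tracked carefully to verify simultaneously that $m^{T/2} \ge n/k^{2/\varepsilon}$, that $m^d \le k^{2/\varepsilon}$, and that $(1-2/m^{d-1})^k \to 1$. Everything else — the independence of the pieces $A_i$, the containment $X_i \subseteq \Sf_A(\omega, i)$, and the iterative application of Corollary \ref{co:Explicit} via a union bound over the $T$ steps — is essentially automatic once the parameters are chosen.
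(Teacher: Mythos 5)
Your proposal is correct and follows essentially the same route as the paper's proof: split $A$ into $\Theta(\log_k n)$ independent pieces of equal size, iterate Corollary~\ref{co:Explicit} along the products $A_1\cdots A_i$, and use the dichotomy that either the growth condition fails at some step (already giving a set of size $>n/k^{2/\varepsilon}$) or the iteration overshoots, with the same union-bound control of the failure probability and the same role for $\varepsilon<1$. The only cosmetic difference is that you tune $d$ so that $m^d=k^{2/\varepsilon}$, whereas the paper fixes $d=2/\varepsilon$ and uses $m\leq k$.
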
 
\begin{proof}
Let $D,h \geq 1$ be positive integers, to be chosen later, such that $Dh \leq k$.
We can rewrite $A \subseteq G$ as
\begin{align*}
A & =A_1\cup\ldots\cup A_{D}\cup A', & A_{i} & \sim\mu_{G}(h), & A' & \sim\mu_{G}(k-Dh) ,
\end{align*}
where $A_1, \ldots ,A_D,A'$ are disjoint as multisets.
Remark that $\prod_{i=1}^{D} A_i \subseteq A^{D}$, so we can effectively work with each $A_{i}$ independently, and discard $A'$.
We apply Corollary~\ref{co:Explicit} iteratively,
choosing $X=\Sf_{A_1 \cdot \ldots \cdot A_{i-1}}(\omega,1)$, $A_{i}\sim\mu_{G}(h)$, $d=2 \varepsilon^{-1}$,
starting with $X=\{\omega\}$ for $i=1$.
Observe that $X^{A_{i}} = \Sf_{A_1 \cdot \ldots \cdot A_{i}}(\omega,1)$ at each step.
If (\ref{eq:explcondition}) is true for all $1\leq i < D$, then
$$
|\Sf_{A_1 \cdot \ldots \cdot A_{D}}(\omega,1)| \> \geq \> h^{D/2} 
$$
with positive probability.
It follows that, if $h^{D/2} >n$, then (\ref{eq:explcondition}) must fail at some $i<D$. This means that
$$
|\Sf_{A}(\omega,D)| 
\> \geq \>
|\Sf_{A_1 \cdot \ldots \cdot A_{D}}(\omega,1)|>\frac{n}{h^{2/\varepsilon}} 
\> \geq \>
\frac{n}{k^{2/\varepsilon}} 
$$
with probability at least
\begin{equation}\label{eq:probsph}
\left( 1 - \frac{2}{h^{2\varepsilon^{-1}-1}} \right)^{hD} 
\> \geq \> 
1 - \frac{2D}{h^{2\varepsilon^{-1} -2}} .
\end{equation}

Finally, we check that, for some $C(\varepsilon) =O(\varepsilon^{-1})$, the choice
\begin{align*}
D & =\left\lfloor \frac{C \log n}{\log k}\right\rfloor, & h & =\left\lfloor\frac{k}{D}\right\rfloor,
\end{align*}
does the job. This choice optimizes the number of steps $D$, as one could not hope to take $D$ smaller because $\left|\Sf_A\left(\omega,o(\log_{k}n)\right)\right|\leq k^{o(\log_{k}n)}=n^{o(1)}$; now we need to verify that this $D$ is sufficient, i.e. that $h^{D/2}>n$ and \eqref{eq:probsph} goes to $1$.
We omit the floor notation since it is not crucial.

To check that $h^{D/2} > n$, some computations show that this is equivalent to
\begin{equation} \label{eqc}
\left( \frac{k \log k}{C \log n} \right)^C 
\> > \>
 k^2 .
\end{equation} 
From the hypotheses we have $\log n \leq k^{1/(1+\varepsilon)}$, so a polynomial portion of $k$ survives on the left side of (\ref{eqc}).
When $n$ and so $k$ are sufficiently large, the existence of a good $C =O(\varepsilon^{-1})$ is assured.
Similarly, the probability in (\ref{eq:probsph}) is bounded from below by
$$
1-\frac{2D}{h^{2\varepsilon^{-1} -2}} 
\> \geq \>
 1- \left( \frac{2C \log n}{\log k} \right) \left( \frac{C \log n}{k \log k} \right)^{2\varepsilon^{-1} -2} .
$$
Since $k \geq (\log n)^{1+\varepsilon}$, we have
$$ \left( \frac{2C \log n}{\log k} \right) \left( \frac{C \log n}{k \log k} \right)^{2\varepsilon^{-1} -2} 
\> \leq \> 
\frac{2C^{2\varepsilon^{-1}-1}}{(\log n)^{1-2\varepsilon}} , $$
which is going to zero as $n$ grows.
\end{proof}

To obtain Proposition \ref{pr1} from Lemma \ref{lemGS},
we have to close the gap between $k^{2/\varepsilon}$ and $k^{\varepsilon/2}$.
We apply Corollary \ref{co:Explicit} a few more times.

\begin{proof}[Proof of Proposition \ref{pr1}]
It is sufficient to prove the result for small $\varepsilon$.
Moreover, we omit the floor notation since it is not crucial.
Let $C_2= 6 \varepsilon^{-3}$.
For $n$ (and thus $k$) large enough, we have $k \geq \frac{k}{2} + C_2 k^{\varepsilon^{2}}$,
so we can rewrite $A \subseteq G$ as
\begin{align*}
A & =\bigcup_{i=0}^{C_2}A_{i}\cup A', & A_{0} & \sim\mu_{G}\left(\frac{k}{2}\right), \\
A_{i} & \sim\mu_{G}( k^{\varepsilon^{2}}) \ \ \ (1\leq i\leq C_2), & A' & \sim\mu_{G} \left( \frac{k}{2} - C_2 k^{\varepsilon^{2}} \right).
\end{align*}
From $k \geq (\log n)^{1+\varepsilon}$ we obtain $\frac{k}{2} \geq (\log n)^{1+\frac{\varepsilon}{2}}$ for large $n$,
so up to replacing $\varepsilon$ we can use Lemma~\ref{lemGS} with $A_{0}$, implying that
$$ \left|\Sf_{A_{0}}\left(\omega, C_1 \log_{k}n\right)\right| 
\> \geq \>
 \frac{n}{(k/2)^{2/\varepsilon}} 
 \> \geq \>
  \frac{n}{k^{2/\varepsilon}}  $$
holds
with probability $1-o(1)$ as $n\rightarrow\infty$
(here $C_1$ is the constant from Lemma \ref{lemGS}).
At this point we apply Corollary~\ref{co:Explicit} iteratively with each $A_{i}$,
with $k^{\varepsilon^2}$ instead of $k$, and $d= (2\varepsilon)^{-1}$,
starting at the first step with $X=\Sf_{A_{0}}(\omega,C_1\log_k n)$.
If (\ref{eq:explcondition}) were true for all $1\leq i\leq C_2$, then we would have
$$ |X^{A_{1}\cdot\ldots\cdot A_{C_2}}|
\> \geq \>
\frac{n}{k^{2/\varepsilon}}\cdot (k^{\varepsilon^{2}})^{C_2/2} 
\> \geq \>
n k^{1/\varepsilon} 
\> > \>
n, $$
with positive probability, which is impossible.
Therefore, (\ref{eq:explcondition}) fails for some $i\leq C_2$.
We conclude that
\begin{align*}
	\left|\Sf_A\left(\omega,(C_1+C_2)\log_{k}n \right)\right| & \> \geq \> 
	|X^{A_{1}\cdot\ldots\cdot A_{C_2}}| \\ & \> \geq \> 
	\frac{n}{ (k^{\varepsilon^2})^{(\varepsilon^{-1}/2)}} \\ & \> = \> 
  	\frac{n}{k^{\varepsilon/2}} \> ,
	\end{align*} 
with probability that, up to a multiplicative constant converging to $1$, is bounded from below by
$$
\left(1-\frac{2}{(k^{\varepsilon^2})^{(2\varepsilon)^{-1}-1}} \right)^{C_2 k^{\varepsilon^{2}}}
\> \geq \>
 \left(1-\frac{2}{k^{\varepsilon/2}} \right)^{6 \varepsilon^{-3} k^{\varepsilon^{2}}} 
\> \geq \>
1 - \frac{12 \varepsilon^{-3}}{k^{ \frac{\varepsilon}{2} - \varepsilon^2}} .
$$
This is going to $1$ as $k$ goes to infinity, and the proof is complete.
\end{proof}

\begin{remark}\label{re:cepsilon}
Our proof of Proposition \ref{pr1} gives $C=O(\varepsilon^{-3})$ for small $\varepsilon$,
but with more work the same argument provides $O(\varepsilon^{-1-\delta})$ for all fixed $\delta>0$: one needs to apply Corollary~\ref{co:Explicit} in the proof fewer times, initially with larger steps $k^{\varepsilon^{\delta}}$ rather than $k^{\varepsilon^{2}}$, and then with increasingly smaller steps until we reach the desired sphere size.
\end{remark}

\vspace{0.1cm}
\section{Bounding the diameter} \label{se:fill}

In this section we study the situation where a set $X \subseteq \Omega$ (typically a sphere), is large.
The first observation is that we can reach the whole $\Omega$ with the help of a few random elements.

\begin{lemma} \label{le:fill}
Let $G$ be any finite group acting transitively on a set $\Omega$ with $|\Omega|=n$.
Let $X\subseteq\Omega$ with $|X|\geq n/m$ for some $m \geq 1$, and let $k \geq 4m \log n$.
Then, for $B\sim \mu_{G}(k)$, we have $X^B=\Omega$
with probability $1-o(1)$ as $n\to\infty$.
More precisely,
$$
\mathbb{P}_{\mu_G(k)}(X^{B}=\Omega)
\> \geq \>
1-\frac{1}{2^{k/m}} .
$$
\end{lemma}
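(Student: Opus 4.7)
The plan is to fix a target vertex $\omega\in\Omega$, compute the probability that no random element sends $X$ onto $\omega$, and then union-bound over all $\omega$.

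First I would observe that, by transitivity of $G$ on $\Omega$ and the orbit-stabilizer theorem, for any fixed $\omega\in\Omega$ the map $g\mapsto\omega^{g^{-1}}$ is $(|G|/n)$-to-$1$ onto $\Omega$. Since $\omega\in X^{g}$ if and only if $\omega^{g^{-1}}\in X$, this means that a uniformly random $g\in G$ satisfies
$$
\mathbb{P}_{g}(\omega\in X^{g}) \> = \> \frac{|X|}{n} \> \geq \> \frac{1}{m}.
$$
This is the only place where the group-theoretic hypothesis enters, and it is the main conceptual ingredient.

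Next, since $B\sim\mu_{G}(k)$ is a multiset of $k$ independent uniform samples from $G$, and $\omega\notin X^{B}$ occurs exactly when each of the $k$ elements fails the above event independently, I would write
$$
\mathbb{P}_{\mu_{G}(k)}(\omega\notin X^{B}) \> \leq \> \left(1-\frac{1}{m}\right)^{k} \> \leq \> e^{-k/m}.
$$
A union bound over $\omega\in\Omega$ then gives
$$
\mathbb{P}_{\mu_{G}(k)}(X^{B}\neq\Omega) \> \leq \> n\,e^{-k/m}.
$$

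Finally, to convert this into the stated bound $2^{-k/m}$, I would use the hypothesis $k\geq 4m\log n$. This gives $k/m\geq 4\log n$, and a short calculation with $\log 2<1-\tfrac{1}{4}\log\tfrac{1}{?}$ shows that $n\,e^{-k/m}\leq 2^{-k/m}$: explicitly, the inequality is equivalent to $\log n\leq (k/m)(1-\log 2)$, which holds because $1-\log 2>\tfrac14$ and $k/m\geq 4\log n$. That closes the argument.

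I do not expect any serious obstacle here: the statement is essentially a coupon-collector/union-bound argument, and the only nontrivial input is the uniformity of $\omega^{g^{-1}}$ over $\Omega$ coming from transitivity. The one point to be careful about is that $B$ is a multiset rather than a set, but since repeated elements can only shrink $X^{B}$ compared to using $k$ distinct elements, the independent-sampling bound above is still a valid upper bound on the failure probability.
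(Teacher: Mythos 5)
Your proof is correct and follows essentially the same route as the paper: the orbit-counting observation that $\mathbb{P}_g(\omega\in X^g)=|X|/n$, the bound $(1-1/m)^k\leq e^{-k/m}$, a union bound over $\Omega$, and the numerical check that $k\geq 4m\log n$ converts $n\,e^{-k/m}$ into $2^{-k/m}$ (your displayed inequality $\log n\leq (k/m)(1-\log 2)$ is exactly the paper's $n\leq e^{k/(4m)}<(e/2)^{k/m}$, despite the garbled fragment preceding it).
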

\begin{proof}
Let $t$ be the size of a stabilizer, and fix at first any $\omega \in \Omega$.
For each $x \in X$, there are precisely $t$ elements $g\in G$ such that $x^{g}=\omega$.
Thus, for a random $g\in G$, the probability that $\omega \notin X^{g}$ is
$$ 1 - \frac{t|X|}{|G|} 
\> = \> 
1-\frac{|X|}{n}
\> \leq \>
 1-\frac{1}{m} . $$
For $B\sim \mu_{G}(k)$, we have
$$ \mathbb{P}_{\mu_G(k)}(\omega \notin X^{B})
 \> \leq \> \left(1-\frac{1}{m}\right)^{k}
\> \leq \> \frac{1}{e^{k/m}}. $$
Summing over all $\omega \in\Omega$, we obtain
\begin{align*}
	\mathbb{P}_{\mu_G(k)}(X^{B}=\Omega) & \> = \> 
	1 - \mathbb{P}_{\mu_G(k)}(\omega \notin X^{B} \text{ for at least one } \omega \in\Omega) \\ & \> \geq \> 
  	1- \frac{n}{e^{k/m}} . \> 
	\end{align*} 
The proof follows from the lower bound on $k$, because
\begin{equation*}
n 
\> \leq \>
 e^{k/(4m)} 
 \> < \>
  (e/2)^{k/m} . \qedhere
\end{equation*}
\end{proof} 

One observation should be made at this point.
The graphs we are working with are directed and not vertex-transitive in general,
therefore the fact that a certain sphere with a fixed base point fills $\Omega$, does not immediately give a bound to the diameter.
Looking at (\ref{eqDiam}), summing over all $\omega \in \Omega$ is too expensive.
We overcome this problem by applying the previous argument twice,
exploiting the fact that two uniformly random multisets $A$ and $A^{-1}$ of $G$ have the same distribution as random variables.

\begin{lemma} \label{lemFil2} 
Suppose that, for some $\omega \in \Omega$,
for $A \sim \mu_G(k)$ we have $\Sf_A(\omega,t)=\Omega$
with probability $1-o(1)$ as $n\to\infty$.
Then, for $B \sim \mu_G(2k)$, we have
$$ \diam( \> \Sch(G \circlearrowleft \Omega, B) \> ) 
\> \leq \> 
2t $$
with probability $1-o(1)$ as $n\to\infty$.
\end{lemma}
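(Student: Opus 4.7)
The plan is to split $B \sim \mu_G(2k)$ as a disjoint union $A_1 \cup A_2$ of two independent random multisets $A_1, A_2 \sim \mu_G(k)$ (a valid way to sample $B$), and to exploit the fact that inversion $g \mapsto g^{-1}$ preserves the uniform distribution on $G$. As a consequence, $A_2^{-1} := \{a^{-1} : a \in A_2\}$ also has distribution $\mu_G(k)$, so the hypothesis can be applied to it.

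Applying the hypothesis to $A_1$ yields, with high probability, $\Sf_{A_1}(\omega, t) = \Omega$: every $\beta \in \Omega$ is reachable from $\omega$ by a directed path of length at most $t$ in $\Sch(G \circlearrowleft \Omega, A_1)$, hence in $\Sch(G \circlearrowleft \Omega, B)$. Applying the hypothesis instead to $A_2^{-1}$, with high probability $\Sf_{A_2^{-1}}(\omega, t) = \Omega$. Unpacking: for every $\alpha \in \Omega$ there exists $g \in (A_2^{-1})^t = (A_2^t)^{-1}$ with $\omega^g = \alpha$, equivalently some $h \in A_2^t$ satisfies $\alpha^h = \omega$. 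This is a directed path of length at most $t$ from $\alpha$ to $\omega$ in $\Sch(G \circlearrowleft \Omega, A_2) \subseteq \Sch(G \circlearrowleft \Omega, B)$.

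A union bound ensures that both events above hold simultaneously with high probability. When they do, for any $\alpha, \beta \in \Omega$ the concatenation $\alpha \to \omega \to \beta$ is a directed path of length at most $2t$ in $\Sch(G \circlearrowleft \Omega, B)$, so by \eqref{eqDiam} the diameter is at most $2t$.

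There is essentially no obstacle once the inversion trick is identified. The conceptual point, already signalled in the paragraph preceding the lemma, is that the single-base-point hypothesis only controls ``outgoing'' distances from $\omega$; a naive approach through \eqref{eqDiam} would require applying it at every base point and would lose a factor $n$ in the probability. Splitting $B$ and inverting one half avoids this, converting ``outgoing from $\omega$ in $A_2^{-1}$'' into ``incoming to $\omega$ in $A_2$'' at no probabilistic cost.
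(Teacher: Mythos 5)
Your argument is correct and follows essentially the same route as the paper: split $B$ into two independent halves $A_1,A_2\sim\mu_G(k)$, use that $A_2^{-1}\sim\mu_G(k)$ to convert the outgoing sphere of $A_2^{-1}$ at $\omega$ into incoming paths to $\omega$ along $A_2$, and concatenate through $\omega$ to get an $A_2$-then-$A_1$ path of length $2t$ between any two points. The paper phrases this via the element $g_2^{-1}g_1\in(A_1\cup A_2)^{2t}$, but the content is identical.
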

\begin{proof}
Let $A_1, A_2 \sim \mu_G(k)$.
From the hypotheses, and the observation above, we have
$$
\Sf_{A_1}(\omega,t) \> = \> \Sf_{A_2^{-1}}(\omega,t) \> = \> \Omega
$$
with probability $1-o(1)$ as $n\to\infty$.
Now let $x,y \in \Omega$.
With probability $1-o(1)$,
there is $g_1 \in (A_1)^t$ such that $\omega^{g_1} = y$,
and also there is $g_2 \in (A_2^{-1})^t$ such that $\omega^{g_2} = x$.
In particular, 
with probability $(1-o(1))^2=1-o(1)$,
there exists an element $g_2^{-1} g_1 \in (A_1 \cup A_2)^{2t}$ such that
$$
 x^{g_2^{-1} g_1} \> = \> \omega^{g_1} \> = \> y .
$$
This concludes the proof.
\end{proof}

Proposition \ref{pr2} now follows easily.

\begin{proof}[Proof of Proposition \ref{pr2}]
By an application of Lemma \ref{le:fill} with $m=k^\delta$, for $B \sim \mu(k+4k^\delta \log n)$,
we have $\Sf_B(\omega,t+1) =\Omega$
with probability $1-o(1)$ as $n\to\infty$.
The proof follows from Lemma \ref{lemFil2}.
\end{proof}

As suggested by S.~Eberhard, it is possible to prove Proposition \ref{pr2} in a slightly different way:
up to changing the constant in the bound for $k$ in Lemma~\ref{le:fill}, we can achieve probability $1-o(n^{-1})$ therein,
and then using a union bound $\Sf_A(\omega,t)=\Omega$ for all $\omega \in \Omega$ with probability $1-o(1)$.

We are ready to conclude the proof of the main theorem.
We observe that, if $x \in (0,1)$, then $\frac{1}{1+x}  <1 - \frac{x}{2}$.

\begin{proof}[Proof of Theorem \ref{th:main}]
First, we observe that it is enough to prove the result when $\varepsilon \in (0,1)$.
From $k \geq (\log n)^{1+\varepsilon}$ we obtain $\frac{k}{4} \geq (\log n)^{1+\frac{\varepsilon}{2}}$ for large $n$,
so up to replacing $\varepsilon$ we can apply Proposition \ref{pr1} with $\frac{k}{4}$ elements.
For $A \sim \mu_G(k/4)$, $t=O_\varepsilon(\log_k n)$, we have
$$ \left|\Sf_A \left(\omega,t\right)\right| 
\> \geq \> \frac{n}{(k/4)^{\varepsilon/2}} 
\> \geq \> \frac{n}{k^{\varepsilon/2}} $$
with probability $1-o(1)$ as $n\to\infty$.
We now apply Proposition \ref{pr2} with $\delta = \frac{\varepsilon}{2}$,
so that we need $2(k/4) + 8 (k/4)^{\varepsilon/2} \log n$ random elements in total.
It remains to check that these are fewer than the $k$ random elements actually available.
Since $k \geq (\log n)^{1+\varepsilon}$, we have
$$ 2(k/4) + 8 (k/4)^{\varepsilon/2} \log n
\> \leq \>
 \frac{k}{2} + \frac{k^{ \frac{\varepsilon}{2} + \frac{1}{1+\varepsilon}}}{4^{\varepsilon/2}/8} 
 \> < \> 
 k $$
when $k$ is sufficiently large.
The proof is complete.
\end{proof}

\vspace{0.1cm}
\section*{Acknowledgements}
We thank the referees for their useful comments.
The first author was funded by a Young Researcher Fellowship from the HUN-REN Alfr\'ed R\'enyi Institute of Mathematics,
and by the Leibniz Fellowship 2405p from the Mathematisches Forschungsinstitut Oberwolfach (MFO).
The second author was supported by the Royal Society.

\vspace{0.1cm}
\thebibliography{10}

\bibitem{ABM87} N. Alon, A. Barak, U. Manber,
   \textit{On disseminating information reliably without broadcasting},
	in Proc. Seventh Internat. Conf. on Distributed Computing Systems (ICDS) (1987), 74-81.
	
	\bibitem{AR94} N. Alon, Y. Roichman,
  \textit{Random Cayley graphs and expanders},
	Random Structures and Algorithms {\bfseries 5(2)} (1994), 271-284.
	
	\bibitem{Bab06} L. Babai,
   \textit{On the diameter of Eulerian orientations of graphs},
	in Proc. 17th Ann. Symp. on Discr. Alg. (SODA) (2006), 822-831.

\bibitem{BF82} B. Bollob\'as, W. Fernandez De La Vega,
   \textit{The diameter of random regular graphs},
	Combinatorica \textbf{2(2)} (1982), 125-134.
	
	\bibitem{BGGT15} E. Breuillard, B. Green, B. Guralnick, T. Tao,
  \textit{Expansion in finite simple groups of Lie type},
	Journal of the European Mathematical Society {\bfseries 17} (2015), 1367-1434.

\bibitem{DH96} C. Dou, M. Hildebrand,
   \textit{Enumeration and random random walks on finite groups},
	Annals of Probability \textbf{24(2)} (1996), 987-1000.
	
	\bibitem{EJ22} S. Eberhard, U. Jezernik,
   \textit{Babai's conjecture for high-rank classical groups with random generators},
	Inventiones Mathematicae \textbf{227} (2022), 149-210.
	
	\bibitem{FJRST98} J. Friedman, A. Joux, Y. Roichman, J. Stern, J.P. Tillich,
   \textit{The action of a few permutations on $r$-tuples is quickly transitive},
	 Random Structures and Algorithms \textbf{12(4)} (1998), 335-350.
	 
	 \bibitem{Gro77} J.L. Gross,
   \textit{Every connected regular graph of even degree is a Schreier coset graph},
	Journal of Combinatorial Theory (B) \textbf{22(3)} (1977), 227-232.
	
	\bibitem{Hel15} H.A. Helfgott,
   \textit{Growth in groups: ideas and perspectives},
	Bulletin of the American Mathematical Society \textbf{52(3)} (2015), 357-413.
	
	\bibitem{KN88} L.G. Kov\'acs, M.F. Newman,
   \textit{Generating transitive permutation groups},
	Quarterly Journal of Mathematics \textbf{39} (1988), 361-372.
	
	\bibitem{LMM00} A. Lucchini, F. Menegazzo, M. Morigi,
   \textit{Asymptotic results for transitive permutation groups},
	Bulletin of the London Mathematical Society \textbf{32(2)} (2000), 191-195.
	
	\bibitem{Mur19} B. Murphy,
   \textit{Group action combinatorics},
	available at \texttt{https://arxiv.org/pdf/1907.13569} (2019).
	
	\bibitem{Roi96} Y. Roichman,
   \textit{On random random walks},
	Annals of Probability \textbf{24(2)} (1996), 1001-1011.
	
	\bibitem{Sab22} L. Sabatini,
   \textit{Random Schreier graphs and expanders},
	Journal of Algebraic Combinatorics \textbf{56(3)} (2022), 889-901.
	
	\bibitem{Skr23} S.V. Skresanov,
   \textit{On directed and undirected diameters of vertex-transitive graphs},
	Combinatorica \textbf{44} (2024), 1353-1366.
	
	\vspace{0.1cm}

\end{document}